\newtheorem{theorem}{Theorem}
\newtheorem{assumption}{Assumption}
\newtheorem{lemma}{Lemma}
\newcommand{\orc}{Operations Research Center, Massachusetts Institute of Technology, Cambridge, Massachusetts 02139}
\title{Should We Relax Stability in Matching Markets?}
\author{Dimitris Bertsimas \thanks{Sloan School of Management, Massachusetts Institute of Technology, Cambridge, Massachusetts 02139} \and Carol Gao \thanks{\orc} 
  }\date{\today}
\begin{document}
\setlength{\abovedisplayskip}{8pt}
\setlength{\belowdisplayskip}{8pt}

\maketitle

\begin{abstract}
    Centralized assignment markets have historically relied on Deferred-Acceptance (DA) algorithms, which do not  incorporate multiple objectives into the assignment. In this work, we propose an optimization-based many-to-one assignment algorithm that explores the trade-offs between minimizing the number of blocking pairs in the match and other important objectives. In order to scale to high-dimensional problems, we develop an algorithm using inverse optimization to obtain the optimal cost vector that implicitly optimizes for stability. This is empirically tested on two application areas for which the DA algorithm is widely used - school assignment and medical residency match. Computational tests on a simulated Boston Public Schools (BPS) match show that this method effectively reduces transportation cost and increases number of students receiving an offer in the first round of match at the expense of a small percentage of blocking pairs. Similar improvement in the number of couples matched to the same location is observed in a synthetic residency match.
\end{abstract}

\newpage
\section{Introduction}
School assignment in urban districts and hospital-resident matching are two widely studied assignment problems that use a centralized assignment mechanism and employ the Deferred-Acceptance (DA) algorithm \citep{roth2008}. 
The DA algorithm is guaranteed to generate a stable matching, namely there exists no blocking pair. In the school choice setting, this means that there exists no pair of student and school that prefer each other to their current match. 

However, the literature is characterized by an almost religious devotion to stability. 
The DA matching mechanism has two major limitations. First, it focuses on stability as the sole objective and fails to incorporate other important objectives. The assignment is either optimal from the students' perspective using the student-proposing algorithm or optimal from the schools' perspective using the school-proposing algorithm. 
Second, the DA algorithm has little variability. Specifically,  \cite{roth1986} proves that in all stable solutions, the set of applicants that receive an offer is the same. Therefore, there is little variability for adjusting the match rate while maintaining stability. This potentially leads to a significant number of unmatched applicants and vacant positions that could be matched to each other.

In the context of school choice, in large urban school districts, transportation cost constitutes a major component of district budget. For example, in Boston Public Schools (BPS), transportation cost accounts for more than 10\% of the district's overall spending, making it the second largest spending category after teachers and paraprofessionals salary. Moreover, the cost has increased by more than 24 million (17\%) since 2018 \citep{bps_review_2022}. 
Even when considering transportation cost as the sole objective, in an experiment of more than 16,000 students and 90 schools, the total transportation cost differs no more than 0.13\% than among all possible stable solutions \citep{papalexopoulos2022}. 
Moreover, the unmatched students will have to resubmit preferences and participate in an additional round of match that allocates them to the unfilled positions, which eventually violates stability if considered with the assignment in the initial round. Furthermore, conditional on receiving an offer, the probability of reapplication varies less than 10\% between receiving a first choice and receiving the last choice in rank order list \citep{narita2018}. Hence, in order to boost enrollment in public school districts, it is important to raise the number of offers made in the initial assignment process and minimize the number of wasted capacities. 


In medical residency matches, a long-lasting challenge is the existence of couples. Couples comprise a significant proportion of applicants in residency match and accommodating the preference of matching couples to the same hospital has been a long-lasting issue in residency matching markets. However, the existence of couples is difficult to formulate within the DA algorithm, particularly because there is no universal definition of stability when individuals submit single preferences and couples submit joint preferences. \cite{kojima2013} and \cite{mcdermid2010} have defined stable matching when couples submit joint preferences.  \cite{biro2013} illustrate that there is no unique definition of stability when there are joint preferences.
Additionally, when couples submit joint preferences over pairs of programs, there may exist no stable assignment \citep{roth2008}. 
These characteristics make DA an ill-suited algorithm to model the residency problem in the existence of couples. 

\subsection{Literature}
This work is primarily related to the applications of optimization techniques and DA algorithm in the context of school choice.
The student-proposing DA algorithm has been widely employed in large urban districts including Boston and New York City \citep{abdulkadirouglu2005nyc, abdulkadirouglu2005bos}, due to its desirable property of stability. 
\cite{baiou2000} describe the convex hull of stable assignments using a set of linear constraints. \cite{delorme2019} use pre-processing heuristics to scale an mixed integer optimization formulation of the stable assignment problem.
Recent literature also uses optimization techniques to incorporate other objectives while maintaining stability in designing school assignment mechanisms \citep{ashlagi2014, shi2015}. 
In particular, \cite{ashlagi2014} implement a correlated lottery to maximize community cohesion. \cite{bodoh2020} considers a combination of welfare and diversity.
\cite{papalexopoulos2022} develops a custom pre-solve algorithm for the fully stable case that scales the mixed integer optimization  (MIO) to problems of the size of a realistic BPS match. In particular, when no blocking pair is allowed, one could leverage the Rural Hospitals Theorem \citep{roth1986} which specifies that the set of students matched and seats filled at each school are the same in all stable solutions, and any school that is under capacity is matched to the same set of students. Therefore, the MIO problem can be reduced significantly by identifying matches that are fixed in all stable solutions using these rules. 

Another strand of literature studies the assignment mechanism of residency match and particularly the case with couples.
Related to the objective of matching couples to the same location, multiple definitions of stability have been given in a residency match where couples submit joint preferences over pairs of hospitals \citep{kojima2013, mcdermid2010}. They show that stable matching does not always exist when there are couples. 
More recently, \cite{manlove2017} find assignments that minimize the number of blocking pairs and show that such problem is NP-hard and difficult to approximate.

\subsection{Contributions}
We propose a novel assignment algorithm using inverse optimization that incorporates multiple objectives and allows for adjustments on the weights between stability and other important objectives. Our contributions are as follows: 
\begin{itemize}
    \item {High benefit of relaxing stability:} In the context of school choice, we present a trade-off curve on the number of blocking pair created and average travel distance reduced. Policy-makers may choose the weights they would like to place on stability and other objectives to achieve the metrics appropriate for their school districts. Other objectives, such as racial integration, can also be easily incorporated into the objective as needed.  
    \item {{Versatility:}} In a simulated match based on the BPS transportation challenge, we demonstrate up to more than 20\% reduction in average travel distance of students with as low as 5\% of blocking pairs. Similarly, allowing for 5\% of blocking pairs in a residency match of the same problem size leads to a roughly 10\% improvement to the number of couples matched to the same location, as compared to the DA solution.
    \item {{Tractability and Scalability:}}Directly formulating blocking pairs using binary variables and linear constraints leads to 100 million constraints and is challenging to scale to realistic problem sizes when stability is relaxed and the Rural Hospitals Theorem cannot be leveraged \citep{papalexopoulos2022}. However, our method can be scaled to large-scale problems with more than 16,000 students and 90 schools, which constitutes more than 80,000 possible pairs\textemdash the realistic match size at BPS. This algorithm produces an assignment in seconds. 
    \item {{Flexibility:}} Only a limited number of students can be matched without creating blocking pairs. Consequently, after the DA process, many students remain unmatched and some school seats are left vacant.
    In reality, these students and positions are reallocated in an additional round of match, which will eventually lead to blocking pairs. In contrast, our algorithm takes the minimum number of students matched as an input and can fill these positions more optimally in the initial round with a small number of blocking pairs. This gives policy-makers the flexibility to decide how many students they would like to be able to match in the main round without being constrained by the DA algorithm. 
\end{itemize}

\section{Optimization Methodology}
Both the school assignment and the residency match problems consist of a set of applicants $\mathcal{I}$ and a set of programs (schools) $\mathcal{J}$. Each applicant $i$ can at most be matched to one program and each program $j$ cannot admit more applicants than its capacity $C_j$.
Let $p_{i,j}$ denote the utility of applicant $i$ from being matched to program $j$. Similarly, let $q_{i,j}$ denote the utility of program $j$ from admitting applicant $i$.
Each applicant $i$ has a preference set of all the valid programs that they would prefer over being unmatched, denoted by $\mathcal{P}(i) = \{j \in \mathcal{J}: p_{i,j} > 0 \}$. Likewise, let $\mathcal{Q}(j) = \{i \in \mathcal{I}: q_{i,j} > 0 \}$ denote the set of applicants that are admissible to program $j$. Define the set of all valid pairs to be $\mathcal{S} = \{(i,j) \in \mathcal{I} \times \mathcal{J}: p_{i,j} > 0, q_{i,j} > 0\}$.
Let $\mathcal P_>(i,j) = \{k \in \mathcal{J}: p_{i,k} > p_{i,j}\}$ represent the set of programs that applicant $i$ prefers to program $j$, and $\mathcal Q_>(i,j) = \{k \in \mathcal{I}: q_{k,j} > q_{i,j}\}$ represent the set of applicants that program $j$ prefers to applicant $i$.\footnote{Similarly, $\mathcal P_<(i,j) = \{k \in \mathcal{J}: p_{i,k} < p_{i,j}\}$ is the set of programs less preferred to program $j$ for applicant $i$. $\mathcal Q_<(i,j) = \{k \in \mathcal{I}: q_{k,j} < q_{i,j}\}$ is the set of applicants less preferred to applicant $i$ for program $j$. }

This assignment problem can be formulated using the following set of binary variables 
\begin{align*}
    x_{i,j} \in  \{0,1\}  \quad \forall (i,j) \in \mathcal{S}, \quad \text{ where $x_{i,j}$ =1, if applicant $i$ is matched to program } j.
\end{align*}
Applicant $i$ and program $j$ constitute {\bf  a blocking pair} if \\
{\bf  (a1)} applicant $i$ is unmatched {\bf  or} ({\bf  a2)} applicant $i$ is matched to school $k$ but they prefer $j$ to $k$ (i.e.,  $x_{i,k} = 1$ for $j \in \mathcal{P}_>(i,k))$,\\
 {\bf  and}\\
  {\bf  (b1)} program $j$ is under capacity {\bf  or (b2)} program $j$ admits any applicant $k$ but they prefer applicant $i$ to $k$ (i.e.,  there exists a applicant $k$ such that $x_{k,j} = 1$ and $i \in \mathcal{Q}_>(k,j) )$. 

The definition of blocking pairs can be directly formulated by introducing the following binary variables:   
\[
\begin{array}{@{}l l@{}}
    z_i  \in  \{0,1\}  \quad \forall i \in \mathcal{I}, &\quad \text{ where $z_i=1$ if applicant $i$ is unmatched (i.e., ({\bf  a1}) is true)}, \\
    y_j  \in  \{0,1\}  \quad \forall j \in \mathcal{J}, &\quad \text{ where $y_j=1$ if program $j$ is under capacity (i.e., ({\bf  b1}) is true)}, \\
    r_{i,j} \in  \{0,1\}  \quad \forall (i,j) \in \mathcal{S}, &\quad \text{ where $r_{i,j} =1$ if applicant $i$ is unhappy with respect to program $j$}\\
    & \quad \text{ (i.e., ({\bf  a1 or a2}) is true)} ,  \\
    w_{i,j} \in  \{0,1\}  \quad \forall (i,j) \in \mathcal{S}, &\quad \text{ where $r_{i,j} =1$ if program $j$ is unhappy with respect to applicant $i$}  \\
    &\quad \text{ (i.e., ({\bf  b1 or b2}) is true)}, \\
    a_{i,j} \in \{0,1\}  \quad \forall (i,j) \in \mathcal{S}, &\quad \text{ where $a_{i,j}=1$ if applicant $i$ and program $j$ constitute a blocking pair}.
\end{array}
\]
The following set of linear constraints effectively capture the space of allowing no more than $B$ number of blocking pairs: 
\begin{align}
    z_i = 1 - \sum_{j\in \mathcal{P}(i)}x_{i,j}, & \quad \forall i \in \mathcal{I}, \label{const:stu_unmatched} \\
    y_j \le C_j - \sum_{i\in \mathcal{Q}(j)} x_{i,j} \le C_j y_j, & \quad \forall j \in \mathcal{J}, \label{const:sch_under_capacity} \\ 
    r_{i,j} = z_i + \sum_{k \in \mathcal{P}_{<}(i,j)}x_{i,k}, & \quad \forall (i,j) \in \mathcal{S}, \label{const:stu_unhappy} \\
    w_{i,j} \ge y_i , & \quad \forall (i,j) \in \mathcal{S}, \label{const:sch_unhappy1} \\
    w_{i,j} \ge x_{k,j} , & \quad \forall (i,j) \in \mathcal{S}, k \in \mathcal{Q}_{<}(i,j),  \label{const:sch_unhappy2}\\
    w_{i,j} \le y_j + \sum_{k \in \mathcal{Q}_{<}(i,j)}x_{k,j}, & \quad \forall (i,j) \in \mathcal{S},\label{const:sch_unhappy3} \\ 
    r_{i,j} + w_{i,j} \le 1 + a_{i,j}, & \quad \forall (i,j) \in \mathcal{S}, \label{const:blocking} \\
    \sum_{(i,j) \in \mathcal{S}} a_{i,j} \le B. \label{const:blocking_number}
\end{align}
Specifically, constraint \eqref{const:stu_unmatched} indicates that applicant $i$ is unmatched if they are not assigned to any of the programs on their preference list. Constraint \eqref{const:sch_under_capacity} implies that $y_j = 0$ (i.e., program $j$ is not under capacity) only if the number of applicants assigned to it is exactly its capacity.
Constraint \eqref{const:stu_unhappy} directly formulates conditions {\bf  (a1)} and {\bf  (a2)}, ensuring that $r_{i,j} =1$ if either applicant $i$ is unmatched or they are assigned to some program $k$ that is less preferred to them than program $j$. Analogously, constraints \eqref{const:sch_unhappy1} to \eqref{const:sch_unhappy3} formulate conditions {\bf  (b1)} and {\bf  (b2)}, ensuring that $w_{i,j} =1$ if either program $j$ is under capacity or they have admitted an applicant $k$ that is less preferred to them than applicant $i$.
Finally, by the definition of blocking pair, $(i,j)$ constitutes a blocking pair (i.e., $a_{i,j}$ = 1) if both $r_{i,j} = 1$ and $w_{i,j} =1$, as reflected in constraint \eqref{const:blocking}. The number of blocking pairs can then be explicitly constrained to be no more than $B$ in constraint \eqref{const:blocking_number}. 

As noted in \cite{papalexopoulos2022}, this exact method leads to roughly 300,000 binary variables and 100 million constraints when there are 16,255 students and 92 schools, making it not scalable in an assignment problem of the size of BPS. 
Therefore, in the next section, we present a novel optimization algorithm that optimizes for stability and other objectives of interest without directly formulating the blocking pair definition. In particular, we use inverse optimization to extract a cost vector that makes the known stable solution optimal. We tailor this approach to school choice matches in which the objective in addition to stability is minimizing transportation cost and medical residency matches in which the objective is to maximize couples matched to the same location. We refer to this formulation as the {\bf  inverse method}.


\subsection{School Choice}
The school assignment problem is defined by a many-to-one mapping from the set of students $\mathcal{I}$ to the set of schools $\mathcal{J}$. When the sole objective is stability, the problem that minimizes the number of blocking pairs is given by 
\begin{align}
    \min_{\mathbf x} \quad &  \sum_{(i,j) \in \mathcal{S}} b_{i,j} x_{i,j} \\ 
    \text{s.t.} \quad & \sum_{j \in \mathcal{P}(i)} x_{i,j} \le 1, \quad \forall i \in \mathcal{I} ,\label{const:unique_sch}\\
    & \sum_{i \in \mathcal{Q}(j)} x_{i,j} \le C_j, \quad \forall j \in \mathcal{J}, \label{const:capacity} \\ 
    & \sum_{(i,j) \in \mathcal{S}} x_{i,j} \ge N, \label{const:min_matched} \\
    &x_{i,j} \in \{0,1\}, \quad \forall (i,j) \in \mathcal{S},
\end{align}
where $\sum_{(i,j) \in \mathcal{S}} b_{i,j} x_{i,j}$ is the objective that minimizes the number of blocking pairs, but $\mathbf{b}$ is unknown. Constraint \eqref{const:unique_sch} ensures each student is matched to at most one school and Constraint \eqref{const:capacity} specifies that each school does not admit more students than its capacity.

We approximate this integer formulation as a linear optimization (LO) problem by relaxing integrality to $x_{i,j} \in [0,1]$. Using strong duality of this relaxed LP, we derive an inverse optimization problem characterized by the following dual variables: 
\begin{align*}
    & b_{i,j} \in \mathbb{R} ,\quad \forall (i,j) \in\mathcal{S}, \\
    & u_i \le 0, \quad \forall i \in \mathcal{I}, \\
    & v_j \le 0, \quad \forall j \in \mathcal{J}, \\
    & w \ge 0. 
\end{align*}
The inverse optimization problem is given by 
\begin{align}
    \min_{\mathbf {b, u, v, w}} \quad & \mathbf b^\top \mathbf x + \lambda||\mathbf b - \mathbf{\overline b} ||_2 \label{const:inv_objective} \\ 
    \text{s.t.} \quad & u_i + v_j + w \le  b_{i,j}, \quad \forall (i,j) \in \mathcal{S}, \label{const:dual} \\
    & (b_{i,j} - u_i + v_j -w)x_{i,j}^{\text{DA}} = 0, \quad \forall (i,j) \in \mathcal{S}, \label{const:comp_slack}\\ 
    & \sum_{(i,j) \in \mathcal{S}} b_{i,j} x^{\text{DA}}_{i,j} = \sum_{i \in \mathcal{I}} u_i + \sum_{j \in \mathcal{J}} C_j v_j + Nw, \label{const:equal_obj} \\
    & \sum_{(i,j) \in \mathcal{S}} b_{i,j} = 1, \label{const:normalization}
\end{align}
where $\mathbf{\overline b}$ is a initiating vector and $\lambda||\mathbf b - \mathbf{\overline b} ||_2$ is a regularization term \citep{ahuja2001inverse, chan2025inverse}.
Constraint \eqref{const:dual} corresponds to the dual constraints. Constraint \eqref{const:comp_slack} ensures complementary slackness. Constraint \eqref{const:equal_obj} ensures that the primal objective equals the dual objective at optimality. 
We also eliminate trivial solutions of $b_{i,j} = 0$ for all $(i,j)$ by introducing a normalization constraint \eqref{const:normalization}. Finally, to avoid having trivial solutions of only having $b_{i,j} = 1$ for one pair $(i,j)$, we initiate with a cost vector $\mathbf{\overline b}$ \citep{ahuja2001inverse}.

Solving the inverse problem yields an optimal cost vector $\mathbf{ b^*}$ that makes the DA solution $\mathbf{x}^{\text{DA}}$ optimal. Let $d_{i,j}$ denote the distance for student $i$ to travel from home to school $j$. 
We are then ready to solve a multi-objective assignment problem that reaches a target travel objective $T$ while minimizing the number of blocking pairs, given by 
\begin{align}
    \min_{{\bf x}} \quad &\lambda_1 \sum_{(i,j) \in \mathcal{S}}b^*_{i,j}x_{i,j} \underbrace{- \lambda_2 \sum_{(i,j) \in \mathcal{S}} \frac{|\mathcal{P}_<(i,j)|}{|P(i)|}x_{i,j}}_{\text{maximize student preferences}} \underbrace{- \lambda_3 \sum_{(i,j) \in \mathcal{S}} \frac{|\mathcal{Q}_<(i,j)|}{|Q(i)|}x_{i,j}}_{\text{maximize school preferences}} \\
    \text{s.t.} \quad & \sum_{j \in \mathcal{P}(i)} x_{i,j} \le 1, \quad \forall i \in \mathcal{I}, \\
    & \sum_{i \in \mathcal{Q}(j)} x_{i,j} \le C_j, \quad \forall j \in \mathcal{J}, \\ 
    & \sum_{(i,j) \in \mathcal{S}} x_{i,j} \ge N, \\ 
    & \sum_{(i,j) \in \mathcal{S}} d_{i,j}x_{i,j} \le T,\label{const:strict_travel} \\
    &x_{i,j} \in \{0,1\}, \quad \forall (i,j) \in \mathcal{S},
\end{align}
where $T$ is the target total travel distance that can be pre-specified based on budget constraint. 
Recall that $|\mathcal{P}_<(i,j)|$ denotes the number of schools that student $i$ prefers less to school $j$. Note that the set $\mathcal{P}_<(i,j)$ expands as $j$ moves towards the top of the preference list. Meanwhile, $|\mathcal{P}(i)|$ represents the number of all schools ranked by student $i$. 
Therefore, a larger $\frac{|\mathcal{P}_<(i,j)|}{|P(i)|}$ indicates school $j$ is ranked higher in student $i$'s preference list. 
Specifically, if $j$ is ranked last among all of $i$'s ranked schools, then  $|\mathcal{P}_<(i,j)| = 0$ and $\frac{|\mathcal{P}_<(i,j)|}{|\mathcal{P}(i)|} =0$. Conversely, if $j$ is ranked first, then every other school is less-preferred than $j$, then $\frac{|\mathcal{P}_<(i,j)|}{|P(i)|} = \frac{|P(i)|-1}{|P(i)|}$. Therefore, $\frac{|\mathcal{P}_<(i,j)|}{|P(i)|} \in [0,1)$ and by maximizing $\frac{|\mathcal{P}_<(i,j)|}{|P(i)|}$, we are maximizing the students' preferences by matching them to the highest ranked schools possible. The same argument can be made about the school $j$'s preferences by maximizing $\frac{|\mathcal{Q}_<(i,j)|}{|Q(i)|}$. Altogether, these two objectives implicitly ensure that when the solution deviates from the original stable solution, the deviation is in the direction that maximizes both the students' and the schools' preferences, therefore incur minimal blocking pairs. 
The full algorithm is given in Algorithm \ref{alg:inv}.

\subsection{Residency Match}
Analogous to the school choice problem, the hospital-resident matching problem is defined by a set of residents $\mathcal I$ and a set of hospitals $\mathcal J$. A subset of residents $\mathcal R \subset \mathcal I$ are applying with a partner. $D$ is a one-to-one mapping from $\mathcal{R}$ to $\mathcal{R}$ where $D(i) \in \mathcal{R}$ is the partner of resident $i$ for all $i \in \mathcal{R}$. 
We assume that a set of couples $(i, D(i))$ prefer being matched to the same hospital. 
For tractability, we also assume that a set of couple has the same set of hospitals that they find valid, namely $\mathcal P(i) = \mathcal P(D(i))$ for all $i \in \mathcal{R}$, but they may have different preferences over the hospitals within this set, namely for some $j \in \mathcal P(i)$, it is possible that $\mathcal P_>(i,j) \neq \mathcal P_>(D(i),j)$.
To this end, for each set of couple $(i, D(i))$, we model the couple preference with 
\begin{align}
    \sum_{j \in \mathcal P(i)} x_{i,j}x_{D(i),j}.
\end{align}
This expression is equal to 1 if and only if $i$ and $D(i)$ are matched to the same hospital $j$. 
Note that this objective can easily be linearized by introducing auxiliary variables 
$$
y_{i,D(i),j} \in \{0,1\} \quad \forall i \in \mathcal{R}, j \in \mathcal{P}(i), 
$$
which denote whether applicant $i$ and their partner $D(i)$ are both matched to hospital $j$.
The objective for maximizing couple preferences can then be modeled as matching at least $T$ couples to the same location can be modeled as 
\begin{align} 
    & \sum_{i \in \mathcal{R}} \sum_{j \in \mathcal{P}(i)} y_{i,D(i),j} \ge T \\
    &y_{i,D(i),j} \le x_{i,j}, \quad \forall i \in \mathcal{R}, j \in \mathcal{P}(i), \\
    &y_{i,D(i),j} \le x_{D(i),j}, \quad \forall i \in \mathcal{R}, j \in \mathcal{P}(i), \\
    & y_{i,D(i),j} \ge x_{i,j} +  x_{D(i),j} - 1,\quad \forall i \in \mathcal{R}, j \in \mathcal{P}(i). 
\end{align}
Then, the multi-objective optimization that minimizes the number of blocking pairs while achieving the target number of couples matched to the same location is given by 
\begin{align}
    \min_{\mathbf x} \quad & \lambda_1  \sum_{(i,j) \in \mathcal{S}} b^*_{i,j} x_{i,j} \underbrace{- \lambda_2 \sum_{(i,j) \in \mathcal{S}} \frac{|\mathcal{P}_<(i,j)|}{|P(i)|}x_{i,j}}_{\text{maximize applicant preferences}} \underbrace{- \lambda_3 \sum_{(i,j) \in \mathcal{S}} \frac{|\mathcal{Q}_<(i,j)|}{|Q(i)|}x_{i,j}}_{\text{maximize hospital preferences}} \label{max_couple} \\ 
    \text{s.t.} \quad & \sum_{j \in \mathcal{P}(i)} x_{i,j} \le 1, \quad \forall i \in \mathcal{I} \\
    & \sum_{i \in \mathcal{Q}(j)} x_{i,j} \le C_j, \quad \forall j \in \mathcal{J} \\ 
    & \sum_{(i,j) \in \mathcal{S}} x_{i,j} \ge N, \\ 
    & \sum_{i \in \mathcal{R}} \sum_{j \in \mathcal{P}(i)} x_{i,j}x_{D(i), j} \ge T, \label{const:couple}\\
    &x_{i,j} \in \{0,1\}, \quad \forall (i,j) \in \mathcal{S},
\end{align}
where $\sum_{i \in \mathcal{R}} \sum_{j \in \mathcal{P}(i)} x_{i,j}x_{D(i), j}$ is the number of residents being matched to the same location as their partner. Constraint \eqref{const:couple} ensures that at least $T$ couples are matched to the same location.

\begin{algorithm}[H] 
\makeatletter
\makeatother
\begin{algorithmic}
\STATE{\textbf{Input:} applicants  $\mathcal{I}$ and programs $  \mathcal{J}$}
\STATE{\textbf{Parameters:} \\
\quad $\mathcal{P}(i):$ preferences of applicant $i$ \\
\quad $\mathcal{Q}(j):$ preferences of program $j$ \\
\quad $C_j:$ capacity of program $j$ \\
\quad $T:$ target for other objective \\
\quad $(\lambda_1,\lambda_2, \lambda_3):$ weights on minimizing distance to stable solution, maximizing applicant preferences, and maximizing program preferences  \\
\quad $N:$ minimum number of applicants to be matched}
\STATE{\textbf{Output:} assignment solution $\mathbf{x^*}$}
\STATE{\emph{Step 1:} Run Deferred-Acceptance algorithm to obtain stable solution $\mathbf{x}^{\text{DA}}$}
\STATE{\emph{Step 2:} Solve the inverse optimization problem given $\mathbf{x}^{\text{DA}}$. Obtain solution $\mathbf{b^*}$ that corresponds to the stability cost vector associated with the DA solution. }
\STATE{\emph{Step 3:} Solve the multi-objective optimization problem given $\mathbf{b^*}$}.
\STATE {Return $\mathbf{x}^*$. }
\end{algorithmic}
\caption{Optimization for stability and other objectives}
\label{alg:inv}
\end{algorithm}

\section{Computational Results}
This section reports computational results on the trade-offs between blocking pairs and other objectives in realistic matching problems. Section \ref{sec:sch_results} examines the Boston Public Schools (BPS) setting, showing that allowing a small number of blocking pairs yields substantial improvements in transportation outcomes. Section \ref{sec:exact_inverse_comp} compares the inverse and exact approaches on a smaller instance, demonstrating that the objective gap between the two methods is small. Finally, Section \ref{sec:residency_match} highlights the potential to match more couples to the same location with only minor violations of stability.

\subsection{School Choice} \label{sec:sch_results}
This following experiments are using synthetic data of 92 schools and 16,255 students constructed based on the BPS 2017 Transportation Challenge dataset.
We assume that the student’s preference over a school consists of three components:
travel distance, quality of the school, and random unobserved factors unique to each student. 
In particular, given $\phi_i \in [0,1]$, a student $i$'s utility from being matched to school $j$ is described by 
\begin{align} \label{stu_pref}
    p_{i,j}(\phi_i) = \frac{\phi_i}{2}d_{i,j} + \frac{\phi_i}{2} SQF_j + (1-\phi_i)X_{i,j},
\end{align}
where $SQF_j$ is the School Quality Framework score of school $j$ and $X_{i,j} \sim \text{Uniform}(0,1)$ is the random unobserved factor that affects student $i$'s preference on school $j$. 
Each student ranks between 2 to 9 schools. 
We consider 5 heterogeneous types of students corresponding to 5 unique values of $\phi_i \in [0, 0.25, 0.5, 0.75, 1.0]$. Each student is randomly selected to be in one of these 5 types. Specifically, $\phi_i= 0$ indicates that this student ranks schools purely based on some unobserved preferences independent to travel distance and quality of the school and $\phi_i = 1$ means the student ranks schools solely based on travel distances and the reported school qualities.
A student-school pair is considered admissible if the school exists in the student's preference list. This constitutes 95,076 admissible pairs. 
We further assume that schools rank students also based on travel distance and their unobserved random factor, which in reality usually includes factors such as whether the student has siblings attending the school, whether the student has special talent, etc. Given $\mu \in [0,1]$, a school $j$'s utility from admitting student $i$ is then given by 
\begin{align}\label{sch_pref}
q_{i,j}(\mu) = \mu d_{i,j} + (1-\mu) Y_{i,j},
\end{align}
where $Y_{i,j} \sim \text{Uniform}(0,1)$ is the random unobserved factor. In the following analysis, we fix $\mu = 0.75$ for all school $j$.

We first match students to schools using a student-proposing DA algorithm. DA matches 15,466 students to a school in their preference list while leaving 789 students unmatched. The average travel distance among the matched students is 3.21 miles per student. 
A key property of our algorithm is the flexibility of deciding how many students to be matched. To this end, we perform two sets of experiments. 
In the first experiment, we fix the number of students matched to be equal to the number of students matched using the DA algorithm and compare the metrics under both assignments.
Figure \ref{fig:tradeoff_DA} plots the trade-off between the percentage of blocking pairs and the percentage of reduction in average travel distance per student compared to the DA solution.
The x-axis reports the percentage of blocking pairs out of 95,076 admissible pairs. The y-axis reports the percentage of reduction in travel distance as compared to that in the DA solution.
As stability is relaxed, the gain in transportation increases. In particular, a 5\% reduction in average travel distance can be achieved with only 0.7\% blocking pairs and a 10\% reduction can be achieved with 3\% blocking pairs.\footnote{We perform a grid search on the $\lambda$'s to tune for the weights that lead to the smallest number of blocking pairs for a given travel target $T$.}
\begin{figure}[ht!]
    \centering
    \caption{Trade-off in Stability and Average Travel Distance} \includegraphics[width=0.75\linewidth]{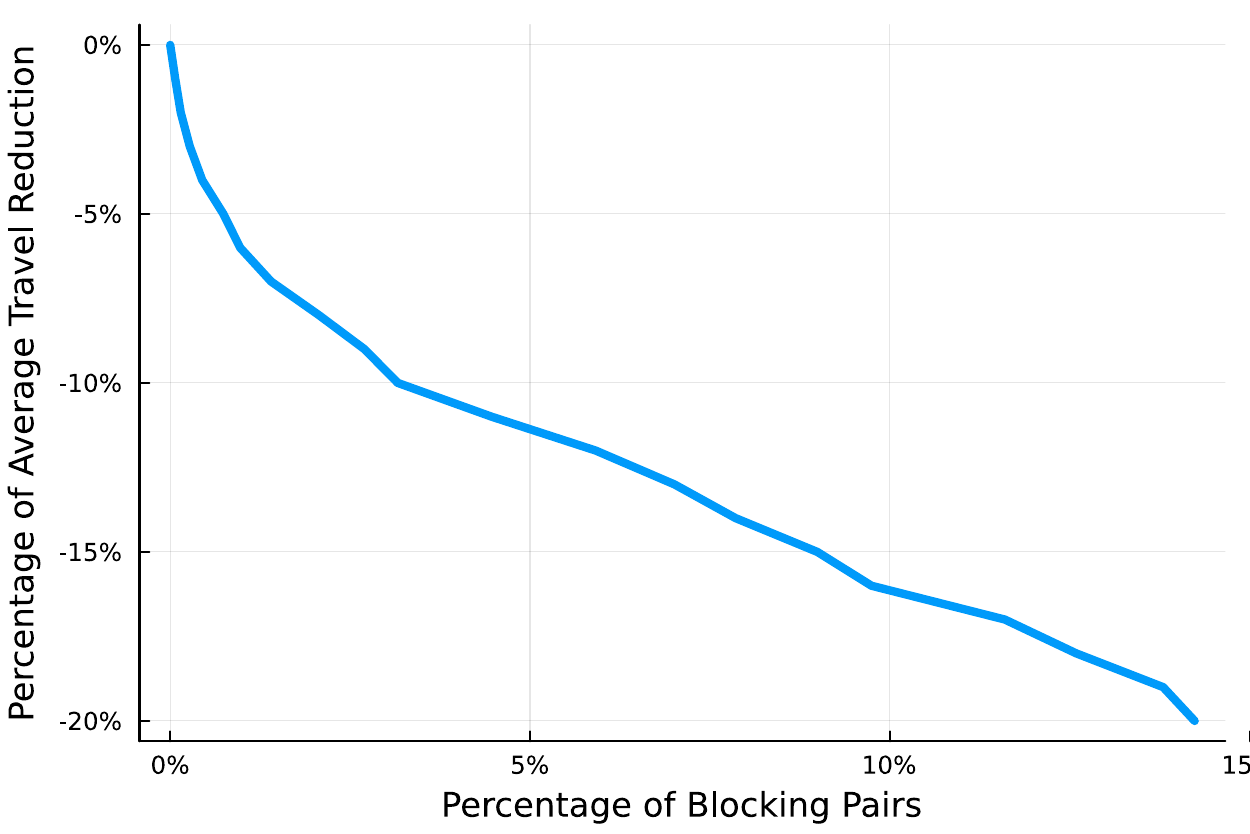}
    \label{fig:tradeoff_DA}
\end{figure}

In addition to the percentage of blocking pairs, we also directly compare the distribution of the ranks of students' matched schools in the DA solution to that in two representative points on the trade-off curve. Figure \ref{fig:stu_matched_ranks} plots the number of students receiving their first to last school choice under the DA assignment and under our assignments with 5\% and 10\% reductions of average travel. This plot indicates that the deviation in distributions is marginal when small numbers of blocking pairs are allowed. 

\begin{figure}[ht!]
    \centering
    \caption{Number of Students by Offer Rank}
    \includegraphics[width=0.75\linewidth]{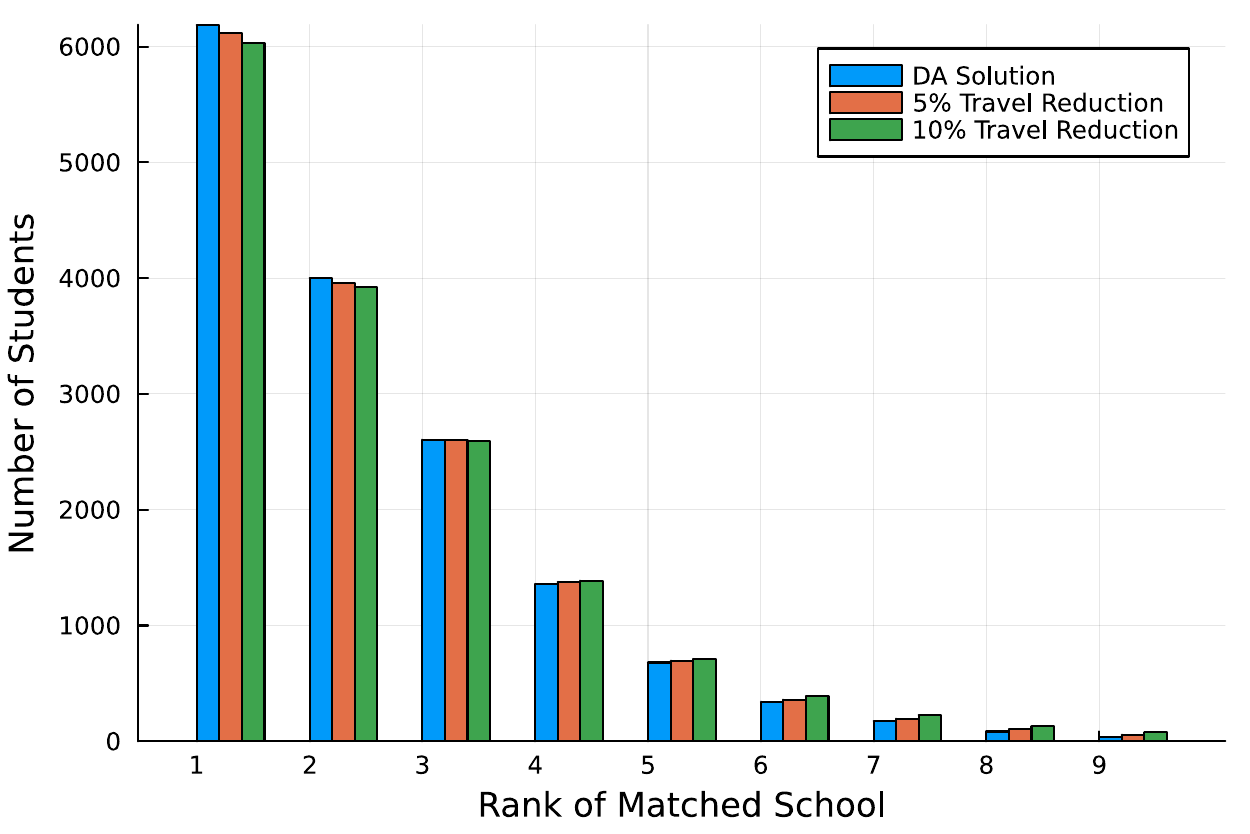}
    \label{fig:stu_matched_ranks}
\end{figure}

In the second experiment, we increase the number of students to be matched and compare the metrics as more students get matched. 
We demonstrate the flexibility of the algorithm in matching more students without increasing travel cost. In the next set of experiments, $T$ is fixed to be the original travel distance under the DA assignment while the minimum number of students matched $N$ is varied. 
Figure \ref{fig:increase_matched} plots the increase in number of blocking pairs, as the number of students matched increases. 
The DA algorithm matches 15,466 students. This indicates that this is the maximum number of students matched without constituting a blocking pair. 
Our algorithm is flexible with respect to the number of students matched. Matching 100 more students, for instance, only leads to 1.8\% of blocking pairs without increasing transportation cost. 200 more students can be assigned a seat at the expense of 3.6\% of blocking pairs and no increase in transportation cost. This suggests that with minimal number of blocking pairs allowed, we could allocate the unfilled seats in the main round optimally. 

\begin{figure}[ht!]
    \centering
    \caption{Number of Students Matched versus Stability and Average Travel Distance}
    \label{fig:increase_matched}
    \includegraphics[width=0.75\linewidth]{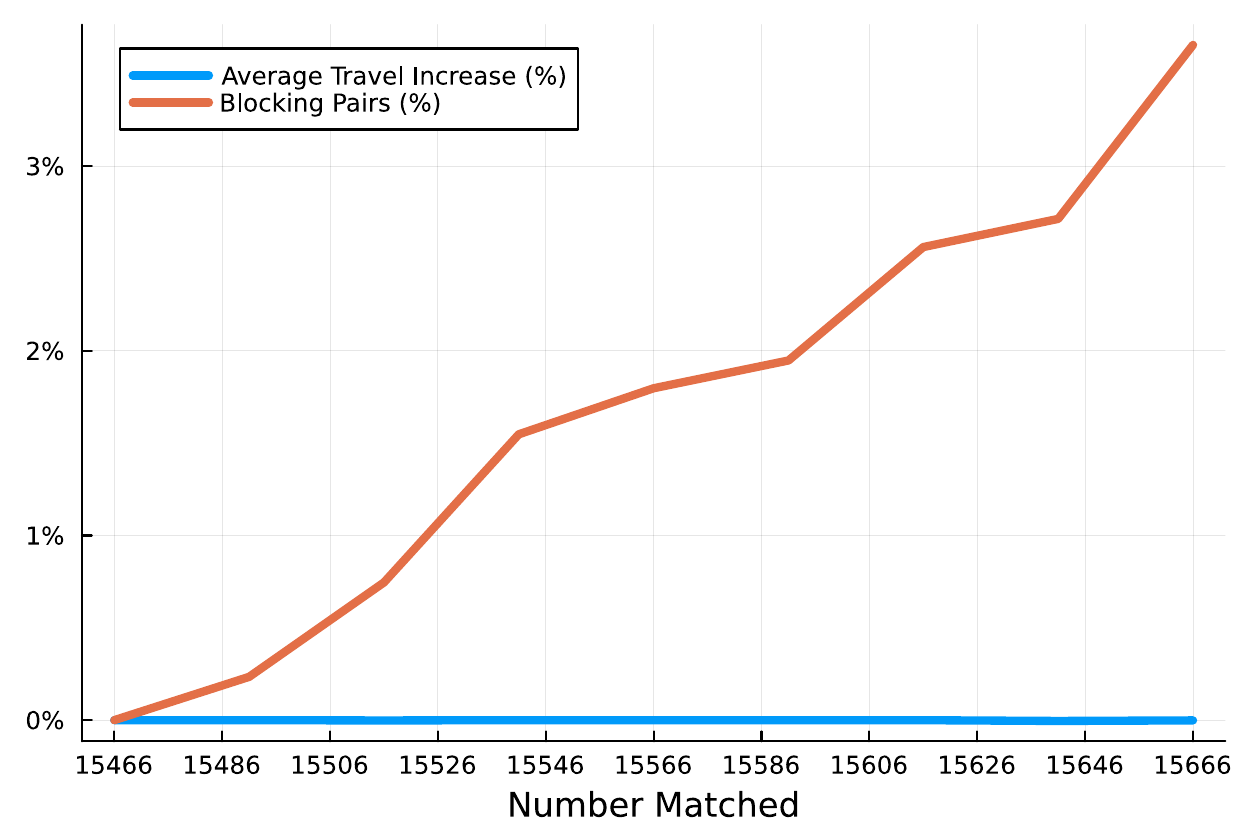}
\end{figure}

We further demonstrate that the gain in transportation cost varies as the students' decisions on how to rank the schools vary.  In the next set of experiments, we instead consider a homogeneous set of student preferences. Namely $\phi_i = \phi$ for all student $i$. We explore how the trade-offs change as student and school preference functions change. To this end, we consider 5 full sets of 16,255 students whose preferences homogeneously follow function \eqref{stu_pref} with $\phi$ values of $ 0, 0.25, 0.5, 0.75, 1.0$, respectively. Similarly, we consider 3 sets of schools whose preferences homogeneously follow function \eqref{sch_pref} with $\mu$ values of $0.5, 0.75, 1.0$. 
This yields 15 individual matching problems.  
We then run the same matching algorithm and present the trade-off curve for each of the 15 problems in Figure \ref{fig:tradeoff_DA_cf}. With 5\% of blocking pairs allowed, the gain in travel reduction varies from 3\% to 20\%. 
In particular, as higher weights are placed on travel distance when generating student preferences, the gain in average reduction becomes smaller as we relax stability. This is because if a student's rank order list consists only of schools that are close to them, then minimizing travel distance at the expense of some blocking pairs does not have a strong effect on average distance traveled. 

\begin{figure}[ht!]
    \centering
    \caption{Trade-off in Stability and Average Travel Distance as Preferences Vary}
    \includegraphics[width=0.75\linewidth]{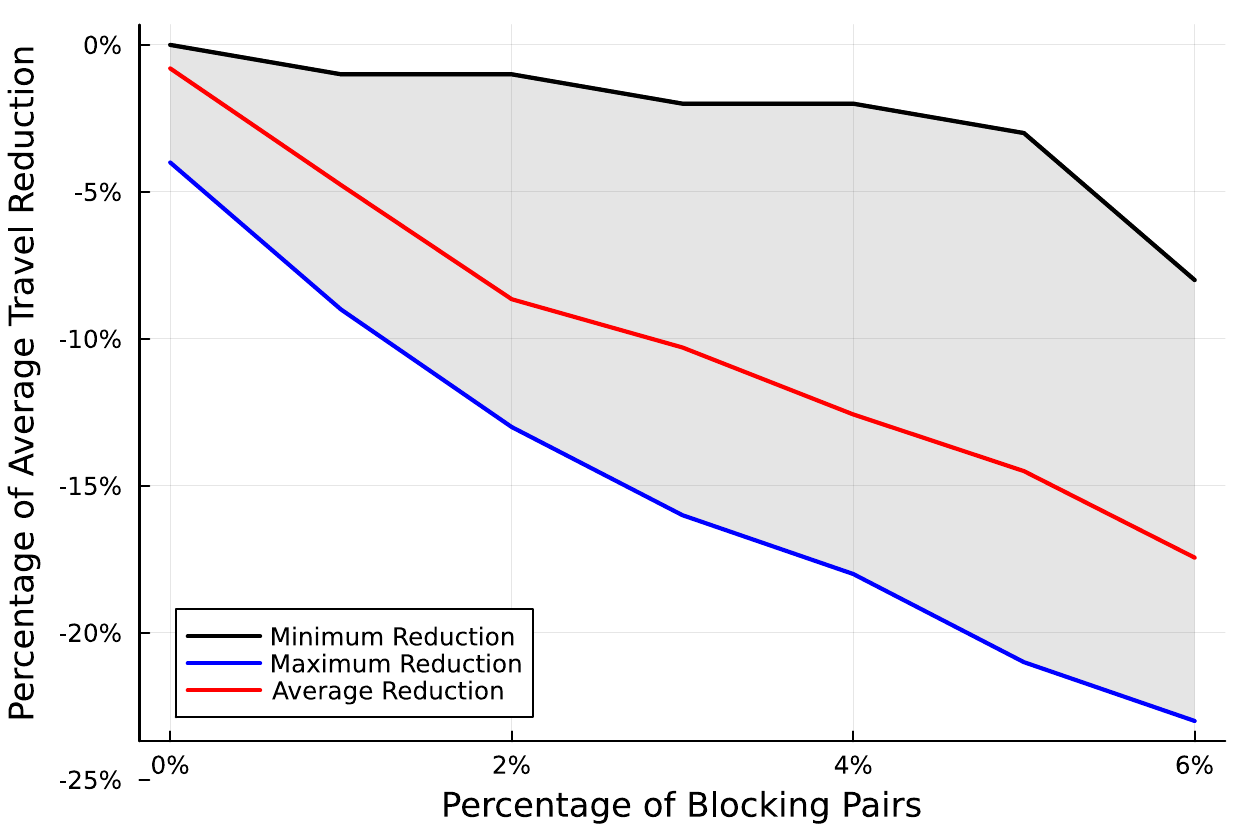}

    \label{fig:tradeoff_DA_cf}
\end{figure}

Finally, we explore different starting points for the inverse optimization problem in \eqref{const:dual} when there exists more than one stable solution. Specifically, \cite{papalexopoulos2022} proposes a MIP-based algorithm that leverages the Rural Hospitals Theorem to find another stable solution that minimizes travel distance. We then use this alternative solution to replace $\mathbf{x}^{\text{DA}}$ when solving problem \eqref{const:dual}. Figure \ref{fig:compare_DA_MIP} compares the trade-off curve using the DA stable solution versus the alternative stable solution that minimizes travel \footnote{This is not the same matching problem as the one in Figure \ref{fig:tradeoff_DA} because the stable solution for that problem is unique. }. The trade-offs are indistinguishable between using the two starting points. This is consistent with the previous finding that optimizing for
travel distance while maintaining strict stability yields minimal improvement.

\begin{figure}[ht!]
    \centering
    \caption{Different Stable Solutions as Starting Points}
    \includegraphics[width=0.75\linewidth]{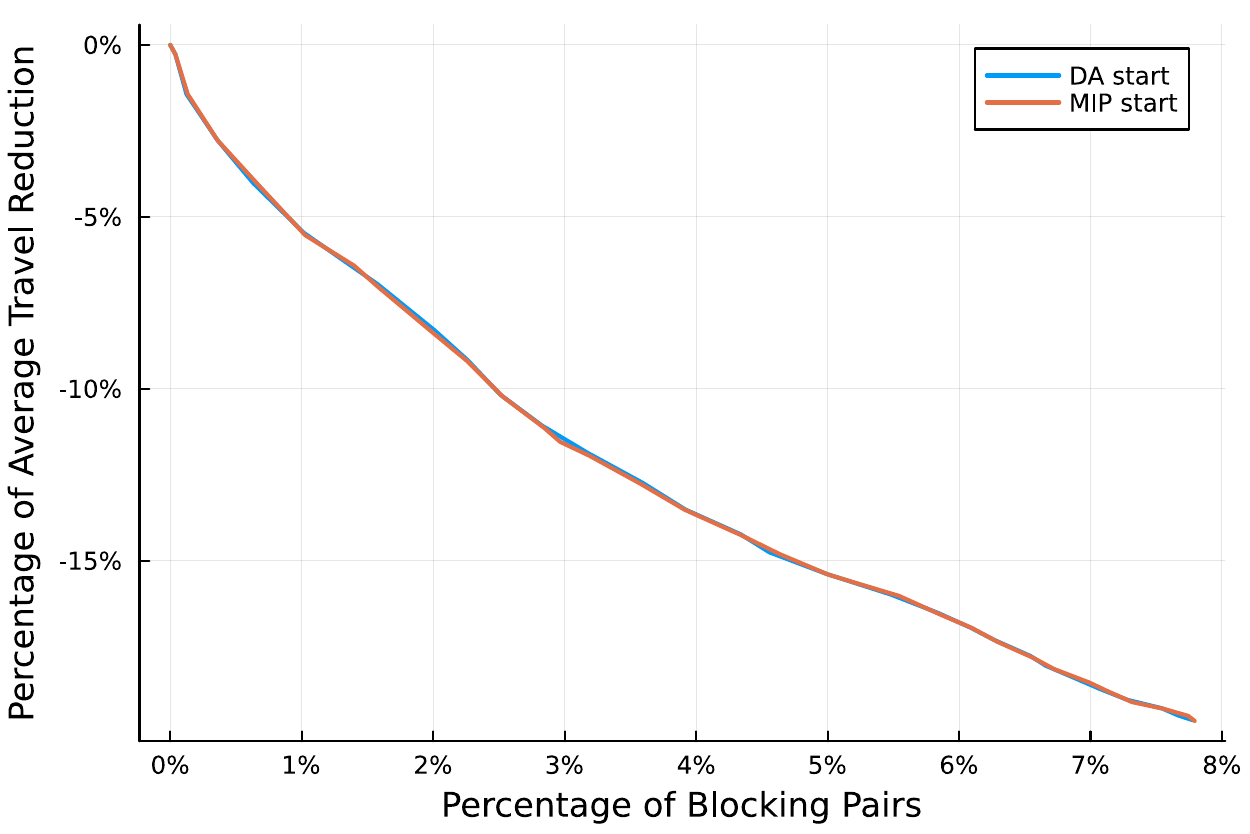}
    \label{fig:compare_DA_MIP}
\end{figure}

\subsection{Comparison with Exact Method}\label{sec:exact_inverse_comp}
In this section, we compare the reductions in student average travel using the exact method and the inverse method for different numbers of blocking pairs allowed. 
Recall that in the \textbf{exact method}, the problem is formulated as minimizing total travel distance subject to feasibility constraints and no more than $B$ blocking pairs, namely 
\begin{align*}
    \min_{\mathbf{x,y,z,r,w,a} } \quad & \sum_{(i,j) \in \mathcal{S}} d_{i,j} x_{i,j} \\
    \text{s.t.} \quad & \text{Constraints } \eqref{const:stu_unmatched} - \eqref{const:blocking_number} \quad &\text{ (Stability constraints)},\\
    & \text{Constraints } \eqref{const:unique_sch} - \eqref{const:min_matched} \quad &\text{ (Feasibility constraints)}, \\
    & \textbf{x,r,w,a} \in \{0,1\}^{|\mathcal{S}|},  \\
    & \textbf{y} \in \{0,1\}^{|\mathcal{I}|}, \\
    & \textbf{z} \in \{0,1\}^{|\mathcal{J}|}.
\end{align*}
Since the exact method does not scale to problem sizes in Section \ref{sec:sch_results}, we perform the comparison on a reduced problem of 200 students and 7 schools using a subset of the students and schools.
In the following experiment, we first use the exact method and obtain the corresponding travel reduction when we incrementally allow 0\% to 20\% blocking pairs. We then implement the inverse method to match the same amount of travel reduction and compare the number of blocking pairs created. Figure \ref{fig:compare_method} compares the trade-offs between travel reduction and blocking pairs between the two methods. 
Admittedly, the exact method is able to achieve a smaller percentage of blocking pairs for the same amount of travel reduction.
However, the inverse method solves in seconds for each set of weights and in minutes to a realistic problem sizes as reported in Section \ref{sec:sch_results}, while the exact method fails to find an optimal solution or even a feasible solution in hours. 
Moreover, for the same amount of travel reduction, the gap in percent of blocking pairs between the two methods is no more than 1\%. The gap begins to shrink even further when more than 10\% of blocking pairs are allowed. 
\begin{figure}[ht!]
    \centering
    \caption{Comparison between Exact and Inverse Methods}
    \includegraphics[width=0.745\linewidth]{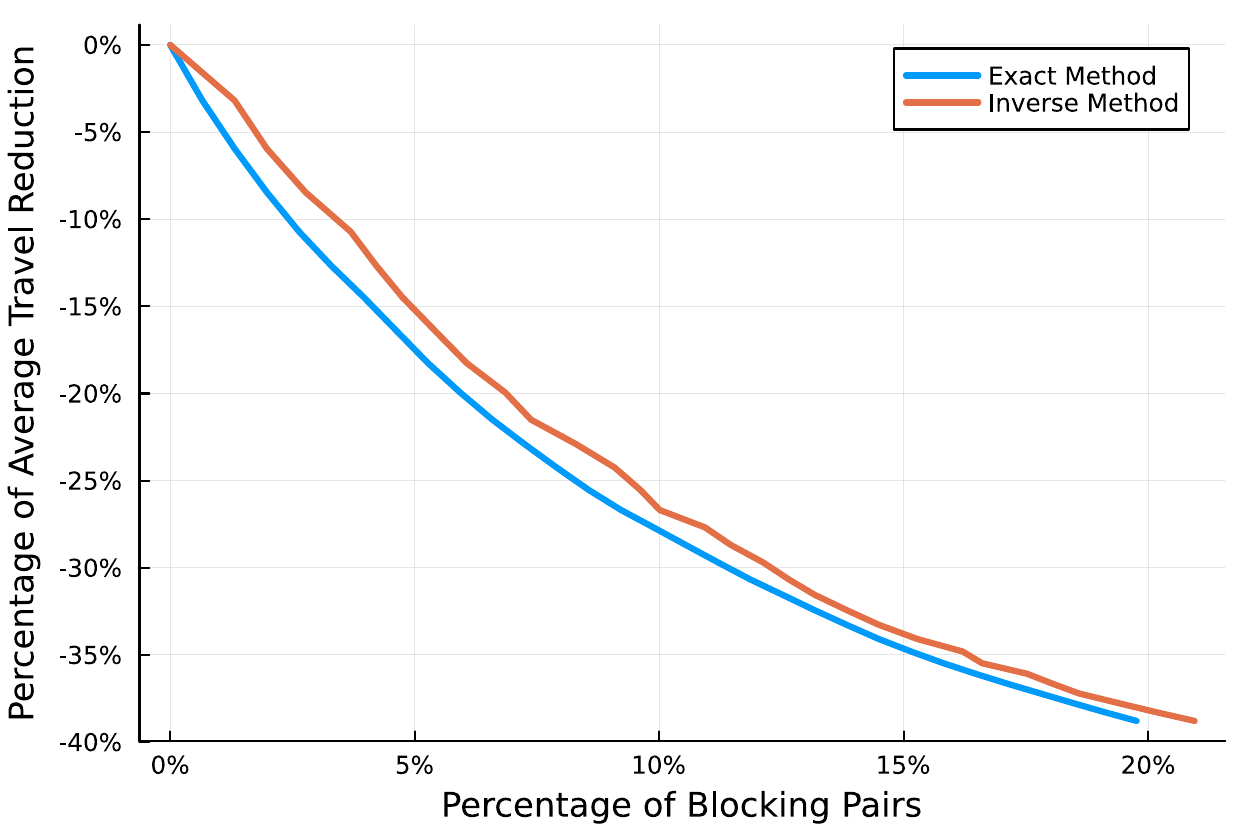}
    \label{fig:compare_method}
\end{figure}

Appendix \ref{appendix:gap_bound} provides a formal exponential tail bound on the gap between the number of blocking pairs under the inverse and exact methods. In particular, Theorem \ref{thm:bp_tailbound} shows that the probability of the inverse method producing substantially more blocking pairs than the exact method is exponentially small.

\subsection{Residency Match}\label{sec:residency_match}
We consider a synthetic residency match with 92 hospitals and 16,255 residents of which 2,000 are applying with a partner. Namely, there are 1,000 pairs of couples, accounting for roughly 12\% of the applicants. 
Figure \ref{fig:tradeoff_DA_residency} plots the trade-off between the percentage of blocking pairs and the percentage of couples matched to the same location.
As a benchmark, the match obtained using the resident-optimal DA algorithm matches 549 out of the 1,000 pairs to the same location. This corresponds to the left corner of the plot with 0\% of blocking pairs. 
Allowing 5\% of blocking pairs matches nearly 60\% of all possible couples to the same location. It is also noteworthy that this accounts for a 9\% increase compared to the numbers of couples matched to the same location in the DA solution.
A more aggressive relaxation of stability leads to 10\% of blocking pairs in trade-off to 67\% of all possible pairs matched to the same location, accounting for 23\% improvement compared to the DA solution. 

\begin{figure}[ht!]
    \caption{Trade-off in Stability and Couples Matched to Same Location}
    \centering
    \includegraphics[width=0.75\linewidth]{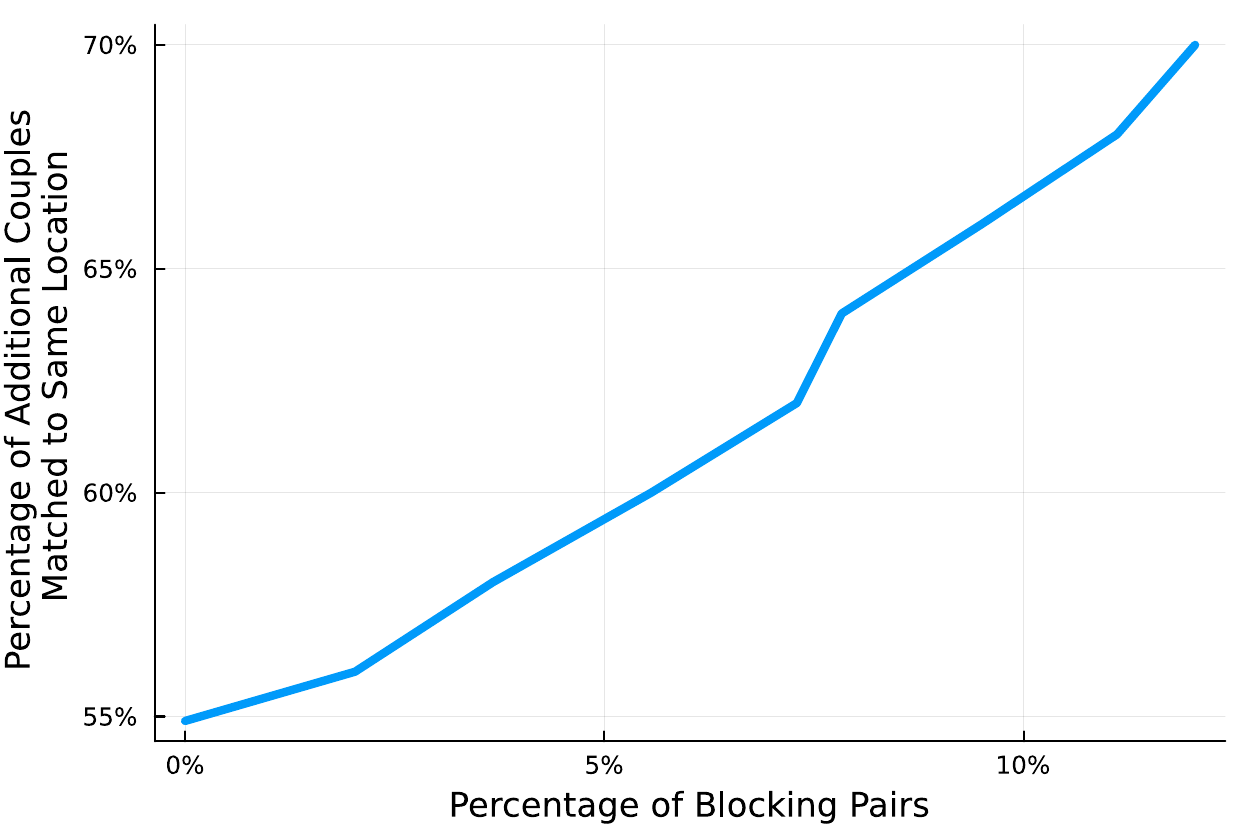}
    \label{fig:tradeoff_DA_residency}
\end{figure}

Figure \ref{fig:resident_matched_ranks} further plots the number of residents receiving their first to last choice under the DA assignment and under our assignments with 5\% and 15\% of blocking pairs. Similar to the school choice problem, the distribution in applicants' offer ranks remains similar when stability is relaxed. 

\begin{figure}[ht!]
    \centering
    \caption{Number of Residents by Offer Rank}
    \includegraphics[width=0.75\linewidth]{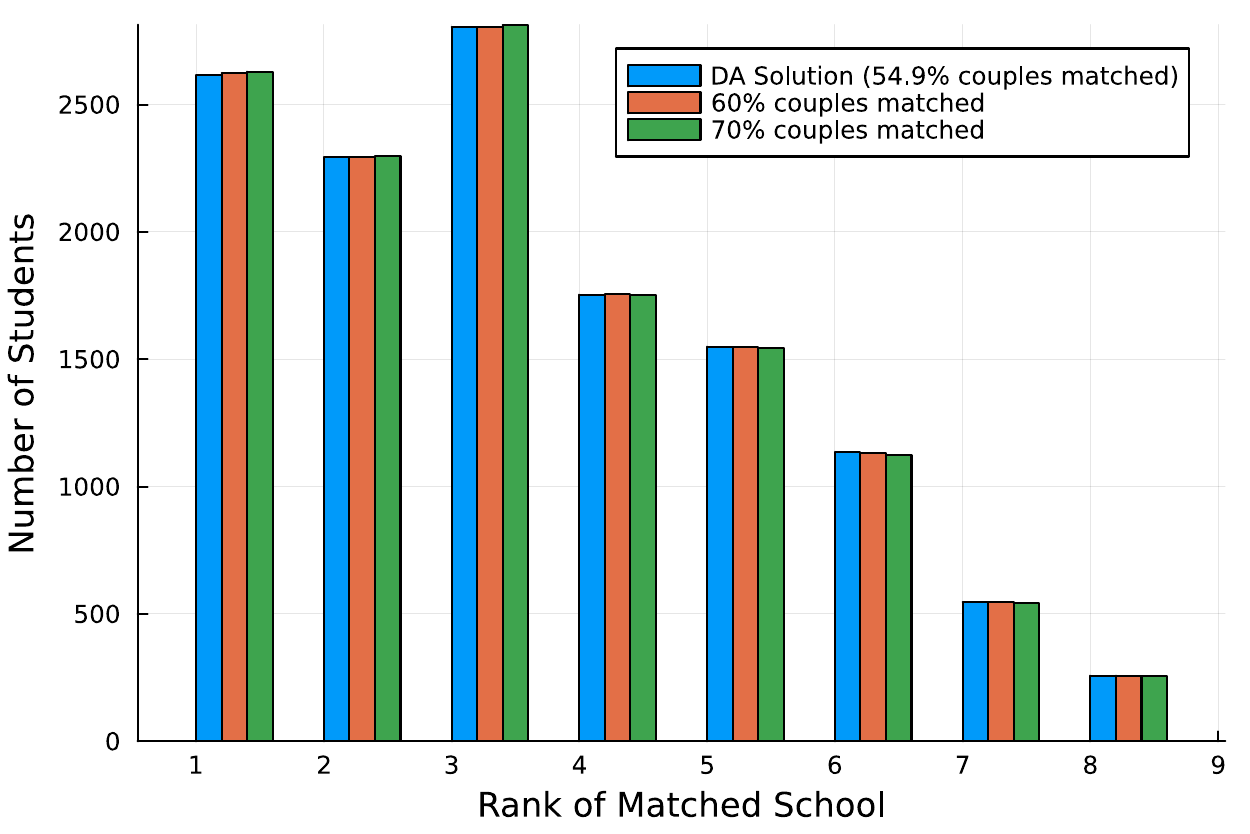}
    \label{fig:resident_matched_ranks}
\end{figure}

\section{Conclusion}
In this work, we develop a novel optimization-based algorithm for solving large-scale many-to-one matching with multiple objectives, allowing for controlled relaxations of stability in the context of school choice and residency match. Computational experiments suggest that permitting as low as 5\% of blocking pairs can yield substantial improvements in other objectives. In particular, average travel distance in Boston Public Schools can be reduced by up to 20\% while maintaining a distribution of student offer ranks comparable to that under Deferred-Acceptance. Similarly, residency match outcomes show a 10\% increase in couples assigned to the same location. Moreover, relaxing stability offers flexibility in shaping outcomes, such as increasing the number of students matched to schools on their preference lists, therefore boosting first-round enrollment in public schools. 

Overall, these results provide empirical evidence that strict stability may be too rigid in practice. Allowing for a small, carefully bounded level of instability can meaningfully improve welfare in centralized matching markets. Thus, in response to the guiding question of this paper, our findings suggest that relaxing stability is not only viable, but in many contexts, desirable.

\section{Acknowledgment}
We thank Theodore Papalexopoulos and Felipe Cordera for helpful comments.

\appendix 
\section{Appendix}
\subsection{Bound on Blocking Pairs under Inverse Method} \label{appendix:gap_bound}

Given a solution to the exact problem, denoted by ${\bf x}^{\text{exact}}$, we can solve the same inverse optimization in Equations \eqref{const:inv_objective} - \eqref{const:normalization} to obtain a cost vector ${\bf b}^{\text{true}}$ that makes ${\bf x}^{\text{exact}}$ optimal. 
Let \( {\bf b}^{\text{inv}} \) denote a cost vector recovered via inverse optimization from a stable solution \( {\bf x}^{\text{DA}} \). 

\begin{assumption}[Sub-Gaussian Inverse Error]\label{assum:subgaussian}
Let \( \Delta {\bf b} := {\bf b}^{\textnormal{inv}} - {\bf b}^{\textnormal{true}} \in\mathbb{R}^{|\mathcal{S}|} \) denote the deviation between the inverse-optimized cost vector associated with the stable solution and the latent true cost vector associated with ${\bf x}^{\text{exact}}$. We assume \( \Delta {\bf b} \) is an i.i.d. sub-Gaussian random vector with zero mean and variance proxy \( \sigma^2 > 0\) .
Then, 
\[
\mathbb{P}\left( \|\Delta {\bf b}\|_2 \geq \sigma \sqrt{|\mathcal{S}|} + t \right)
\leq \exp\left( -\frac{t^2}{2\sigma^2} \right), \quad \text{for all } t > 0.
\]
\end{assumption}
This sub-Gaussian assumption reflects that the inverse-optimized cost vector recovered from the stable solution concentrates around the true latent cost vector. This assumption is most reasonable when the blocking pair budget \( B \) is small, so that the solution \( {\bf x}^{\text{exact}} \) is close in structure to the stable solution \( {\bf x}^{\text{DA}} \). As \( B \) increases, the objectives optimized by \( {\bf b}^{\text{inv}} \) and \( {\bf b}^{\text{true}} \) may diverge significantly, weakening the validity of this assumption.

\begin{lemma}[Lipschitz Continuity of Blocking Pairs]\label{lem:lipschitz-bp-x}
Let \( {\bf x}, {\bf x}' \in \mathcal{F} \subseteq \{0,1\}^{|\mathcal{S}|} \) be two feasible assignment vectors, and let \( \textnormal{BP}({\bf x}) \) denote the number of blocking pairs under assignment \( {\bf x} \). Let $p_{\max} = \max_i |\mathcal{P}(i)|$ denote the maximum number of schools ranked by a student and $C_{\max} = \max_j C_j$ denote the maximum capacity across all schools.
Then \( \textnormal{BP}({\bf x}) \) is Lipschitz continuous with respect to the assignment under the \( \ell_1 \)-norm, with Lipschitz constant \( \frac{1}{2}(p_{\max} + 2C_{\max}) \). Namely, 
\[
\left| \textnormal{BP}({\bf x}) - \textnormal{BP}({\bf x}') \right|
\leq  \frac{1}{2}(p_{\max} + 2C_{\max}) \cdot \| {\bf x} - {\bf x}' \|_1.
\]
\end{lemma}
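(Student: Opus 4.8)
The plan is to prove the estimate by telescoping over elementary single-applicant moves, reducing the global Lipschitz bound to a local count of how many blocking pairs a single move can toggle. Because $\mathbf{x}$ and $\mathbf{x}'$ are binary, $\|\mathbf{x}-\mathbf{x}'\|_1$ equals their Hamming distance. First I would realize the transition $\mathbf{x}\to\mathbf{x}'$ as a finite sequence $\mathbf{x}=\mathbf{x}^{(0)},\dots,\mathbf{x}^{(m)}=\mathbf{x}'$ of feasible assignments in which each step either (un)matches a single applicant or moves one applicant from one program to another, chosen so that the $\ell_1$-length of the path equals $\|\mathbf{x}-\mathbf{x}'\|_1$. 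Since the assignments of interest match a comparable number of applicants (the number matched is pinned by the constraint $\sum x_{i,j}\ge N$ and is held equal to the DA count in the experiments), the generic step is a reassignment that flips two coordinates. The triangle inequality then reduces the claim to bounding $|\text{BP}(\mathbf{x}^{(t)})-\text{BP}(\mathbf{x}^{(t+1)})|$ for one step and summing.

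For the per-step bound I would exploit the dependency structure of the formulation \eqref{const:stu_unmatched}--\eqref{const:blocking}. A pair $(i,j)$ is blocking exactly when $r_{i,j}=w_{i,j}=1$, and for binary variables $|r w-r'w'|\le|r-r'|+|w-w'|$, so it suffices to track where $r$ and $w$ move. By \eqref{const:stu_unhappy}, $r_{i,j}$ is a function of applicant $i$'s own row only; hence reassigning applicant $i_0$ can change $r_{i_0,j}$ for at most $|\mathcal{P}(i_0)|\le p_{\max}$ values of $j$ and changes no other $r$. By \eqref{const:sch_unhappy1}--\eqref{const:sch_unhappy3}, $w_{i,j}$ is a function of program $j$'s admitted set and its capacity slack only; hence only the at most two programs whose admitted set is altered by the move—the one $i_0$ leaves and the one it enters—can contribute, through pairs $(i,j_0)$. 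If each such program contributes at most $C_{\max}$ toggled pairs, a single reassignment changes $\text{BP}$ by at most $p_{\max}+2C_{\max}$; amortizing this over the two units of $\ell_1$-distance that the reassignment costs yields the constant $\tfrac12(p_{\max}+2C_{\max})$, and summing over the path completes the argument.

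The hard part will be the program-side count: bounding by $C_{\max}$ the number of pairs $(i,j_0)$ whose blocking status flips when $j_0$'s admitted set changes by one applicant. The difficulty is that, by \eqref{const:sch_unhappy1} and \eqref{const:sch_unhappy3}, $w_{i,j_0}$ can turn on either because $j_0$ drops under capacity (the $y_{j_0}$ term) or because $j_0$ keeps an admitted applicant less preferred than $i$. Writing $w_{\cdot,j_0}$ as a threshold indicator at the rank of $j_0$'s worst admitted applicant, a perturbation that opens a seat or lowers that threshold can in principle toggle $w_{i,j_0}$ for every applicant in the induced rank gap, a set whose naive size is controlled only by $|\mathcal{Q}(j_0)|$ rather than by $C_{\max}$. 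The proof must therefore charge each toggled pair to one of the at most $C_{\max}$ applicants actually admitted to $j_0$ (together with the single applicant entering or leaving), and in particular argue that the capacity-slack mechanism does not create a large rank gap—exactly the situation guaranteed when the assignments stay structurally close to $\mathbf{x}^{\text{DA}}$, i.e.\ in the small blocking-pair-budget regime under which Assumption \ref{assum:subgaussian} is invoked. Making this charging rigorous is the crux on which the stated constant depends.
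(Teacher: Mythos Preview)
Your approach is the same as the paper's: decompose the transition into single-student reassignments (the paper takes $|A|=\tfrac12\|\mathbf{x}-\mathbf{x}'\|_1$ directly rather than building an explicit path, but the accounting is identical), then bound the per-reassignment change by $p_{\max}$ on the applicant side and $C_{\max}$ at each of the two affected programs. The paper's entire program-side argument is the sentence ``the school $j$ that loses student $i$ may have its priority list updated, possibly changing blocking pair status with up to $C_j \le C_{\max}$ students''; it does not supply the charging argument you flag as the crux, nor does it address the capacity-slack pathology you describe (a single departure dropping $j_0$ below capacity and toggling $w_{i,j_0}$ for a rank interval of size governed by $|\mathcal{Q}(j_0)|$ rather than $C_{j_0}$). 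So you have correctly located a step the paper leaves unjustified, and your proposal is already at least as complete as the paper's own proof; whether the stated constant holds without additional structural assumptions is a question the paper does not settle either.
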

\begin{proof}
Let \( {\bf x}, {\bf x} \) be two feasible matchings. Suppose they differ in the assignment of a set of students \( A \subseteq \mathcal{I} \), where for each student \( i \in A \), their assignment in \( {\bf x} \) and \( {\bf x}' \) corresponds to different schools $j$ and $k$. Then, for each one of these students, 
$$|x_{i,j} - x'_{i,j}| = |x_{i,k} - x'_{i,k}| = 1.$$
Namely, each reassignment of a student \( i \) changes exactly two entries in the binary assignment vector \( {\bf x} \). Thus, for each such change of assignment, the \( \ell_1 \) difference increases by 2. Hence,
\[
|A| = \frac{1}{2} \| {\bf x} - {\bf x}' \|_1.
\]

We then consider the impact of reassigning a single student \( i \) on the number of blocking pairs:
\begin{itemize}
    \item The student \( i \), whose assignment changes, can potentially form up to \( |\mathcal{P}(i)| \leq p_{\max} \) new or removed blocking pairs with schools in their preference list.
    \item The school \( j \) that loses student \( i \) may have its priority list updated, possibly changing blocking pair status with up to \( C_{j} \leq C_{\max} \) students.
    \item Similarly, the school \( k \) that gains student \( i \) may affect up to \( C_{k} \leq C_{\max} \) other blocking pairs.
\end{itemize}
Thus, the total number of blocking pairs that can change due to reassigning student \( i \) is at most:
\[
|\mathcal{P}(i)|  + C_{j} + C_{k} \leq  p_{\max} + 2C_{\max}.
\]
Therefore, 
\[
| \text{BP}({\bf x}) - \text{BP}({\bf x}') | \leq \sum_{i \in A} ( p_{\max} + 2C_{\max}) = ( p_{\max}  + 2C_{\max}) \cdot |A| = \frac{1}{2} ( p_{\max} + 2C_{\max}) \cdot \| {\bf x} - {\bf x}' \|_1.
\]
\end{proof}

\begin{assumption}[Lipschitz Continuity of Assignment]\label{assum:lipschitz}
There exists a constant \( L > 0 \) such that for any two cost vectors \( {\bf b}, {\bf b}' \in \mathbb{R}^{|\mathcal{S}|} \), their corresponding optimal solutions \( {\bf x}({\bf b}), {\bf x}({\bf b}') \) satisfy
\[
\|{\bf x}({\bf b}) - {\bf x}({\bf b}')\|_1 \leq L_2 \cdot \|{\bf b} - {\bf b}'\|_2.
\]
\end{assumption}

\begin{theorem}[Exponential Tail Bound on Blocking Pairs] \label{thm:bp_tailbound}
Suppose \( \varepsilon > L \sigma \sqrt{|\mathcal{S}|} \) and Assumptions \ref{assum:subgaussian} and \ref{assum:lipschitz} hold. 
Then the number of blocking pairs of the inverse-optimized solution satisfies
\[
\mathbb{P}\left( \textnormal{BP}({\bf x}^{\textnormal{inv}}) \geq \textnormal{BP}({\bf x}^{\textnormal{exact}}) + \varepsilon \right)
\leq \exp\left( -\frac{(\varepsilon - L \sigma \sqrt{|\mathcal{S}|})^2}{2 L^2 \sigma^2} \right), 
\]
where $ L = \frac{1}{2}(p_{\max} + 2C_{\max}) L_2$.
\end{theorem}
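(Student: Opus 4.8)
The plan is to compose the two Lipschitz estimates already in hand and then invoke the sub-Gaussian tail bound of Assumption~\ref{assum:subgaussian}. The key observation is that all randomness enters only through $\Delta\mathbf{b} = \mathbf{b}^{\text{inv}} - \mathbf{b}^{\text{true}}$, so once the blocking-pair gap is controlled \emph{deterministically} by $\|\Delta\mathbf{b}\|_2$, the theorem follows from a single application of concentration.

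First I would make the identification $\mathbf{x}^{\text{inv}} = \mathbf{x}(\mathbf{b}^{\text{inv}})$ and $\mathbf{x}^{\text{exact}} = \mathbf{x}(\mathbf{b}^{\text{true}})$, where $\mathbf{x}(\cdot)$ is the optimal-assignment map of the downstream optimization; the second equality holds by construction, since $\mathbf{b}^{\text{true}}$ is precisely the cost vector recovered so as to render $\mathbf{x}^{\text{exact}}$ optimal. Next I would chain Lemma~\ref{lem:lipschitz-bp-x} with Assumption~\ref{assum:lipschitz}:
\[
\bigl|\textnormal{BP}(\mathbf{x}^{\text{inv}}) - \textnormal{BP}(\mathbf{x}^{\text{exact}})\bigr| \le \tfrac{1}{2}(p_{\max}+2C_{\max})\,\|\mathbf{x}^{\text{inv}} - \mathbf{x}^{\text{exact}}\|_1 \le \tfrac{1}{2}(p_{\max}+2C_{\max})L_2\,\|\Delta\mathbf{b}\|_2 = L\,\|\Delta\mathbf{b}\|_2,
\]
which fixes the constant $L$ exactly as in the statement.

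Then I would pass to the one-sided event. Since $\textnormal{BP}(\mathbf{x}^{\text{inv}}) - \textnormal{BP}(\mathbf{x}^{\text{exact}}) \le L\|\Delta\mathbf{b}\|_2$ holds pointwise, the event $\{\textnormal{BP}(\mathbf{x}^{\text{inv}}) \ge \textnormal{BP}(\mathbf{x}^{\text{exact}}) + \varepsilon\}$ is contained in $\{\|\Delta\mathbf{b}\|_2 \ge \varepsilon/L\}$, so monotonicity of probability reduces the claim to bounding the latter. Finally I would apply Assumption~\ref{assum:subgaussian} with the substitution $t = \varepsilon/L - \sigma\sqrt{|\mathcal{S}|}$; the hypothesis $\varepsilon > L\sigma\sqrt{|\mathcal{S}|}$ is exactly what guarantees $t > 0$, so the tail bound applies, and substituting $t^2 = (\varepsilon - L\sigma\sqrt{|\mathcal{S}|})^2/L^2$ into $\exp(-t^2/2\sigma^2)$ yields the stated exponent.

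The argument is essentially a deterministic-then-probabilistic composition, so no single step is genuinely hard; the points requiring care are verifying the positivity of $t$ (which the hypothesis supplies) and the identification $\mathbf{x}(\mathbf{b}^{\text{true}}) = \mathbf{x}^{\text{exact}}$, which leans on the inverse-optimization construction (complementary slackness together with the objective-matching constraint) actually recovering $\mathbf{x}^{\text{exact}}$ as an optimizer under $\mathbf{b}^{\text{true}}$. The one real modeling leap, worth flagging, is that Assumption~\ref{assum:lipschitz} treats the discrete argmax map as Lipschitz in $\mathbf{b}$, absorbing all the combinatorial difficulty into the constant $L_2$; granting it, the remainder is routine.
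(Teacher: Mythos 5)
Your proposal is correct and follows essentially the same route as the paper's proof: chain Lemma~\ref{lem:lipschitz-bp-x} with Assumption~\ref{assum:lipschitz} to get the deterministic bound $|\textnormal{BP}({\bf x}^{\textnormal{inv}}) - \textnormal{BP}({\bf x}^{\textnormal{exact}})| \le L\|\Delta {\bf b}\|_2$, reduce the tail event to $\{\|\Delta {\bf b}\|_2 \ge \varepsilon/L\}$, and apply Assumption~\ref{assum:subgaussian} with $t = \varepsilon/L - \sigma\sqrt{|\mathcal{S}|}$. If anything, your version is marginally tighter, since the paper's side remark that $\textnormal{BP}({\bf x}^{\textnormal{inv}}) \ge \textnormal{BP}({\bf x}^{\textnormal{exact}})$ is not needed once the pointwise absolute-value bound is in hand, and you correctly flag the identification ${\bf x}({\bf b}^{\textnormal{true}}) = {\bf x}^{\textnormal{exact}}$ that the paper uses only implicitly.
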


\begin{proof}
By Lemma \ref{lem:lipschitz-bp-x} and Assumption \ref{assum:lipschitz}, 
\[
|\text{BP}({\bf x}({\bf b})) - \text{BP}({\bf x}({\bf b}'))| \leq \frac{1}{2}(p_{\max} + 2C_{\max}) L_2 \cdot \|{\bf b} - {\bf b}'\|_2.
\]
Let \( \Delta {\bf b} := {\bf b}^{\text{inv}} - {\bf b}^{\text{true}} \in \mathbb{R}^{|\mathcal{S}|} \).
By Assumption \ref{assum:lipschitz}, we have:
\[
| \text{BP}({\bf x}^{\text{inv}}) - \text{BP}({\bf x}^{\text{exact}}) | \leq L \cdot \| \Delta {\bf b} \|_2.
\]
Also note that for a given travel $T$, \(\text{BP}({\bf x}^{\text{inv}}) \geq \text{BP}({\bf x}^{\text{exact}}) \). Then, 
\[
\text{BP}({\bf x}^{\text{inv}}) \geq \text{BP}({\bf x}^{\text{exact}}) + \varepsilon
\implies
L \cdot \| \Delta {\bf b} \|_2 \geq \varepsilon 
\iff  \| \Delta {\bf b} \|_2 \geq  \frac{\varepsilon }{L}
\]
By Assumption \ref{assum:subgaussian}, the following inequality holds for all $t >0$:
\[
\mathbb{P}\left( \| \Delta {\bf b} \|_2 \geq \sigma \sqrt{|\mathcal{S}|} + t \right)
\leq \exp\left( -\frac{t^2}{2\sigma^2} \right). 
\]
Let \( t := \frac{\varepsilon}{L} - \sigma \sqrt{|\mathcal{S}|} \). Since \( \varepsilon > L \sigma \sqrt{|\mathcal{S}|} \), then \( t > 0 \). Then it follows that 
\[
\mathbb{P}\left( \text{BP}({\bf x}^{\text{inv}}) \geq \text{BP}({\bf x}^{\text{exact}}) + \varepsilon \right)
\leq \mathbb{P}\left( \| \Delta {\bf b} \|_2 \geq \frac{\varepsilon}{L} \right)
= \mathbb{P}\left( \| \Delta {\bf b} \|_2 \geq \sigma \sqrt{|\mathcal{S}|} + t \right)
\leq \exp\left( -\frac{t^2}{2\sigma^2} \right).
\]
Substituting back yields
\[
\mathbb{P}\left( \text{BP}({\bf x}^{\text{inv}}) \geq \text{BP}({\bf x}^{\text{exact}}) + \varepsilon \right)
\leq
\exp\left( -\frac{\left( \frac{\varepsilon}{L} - \sigma \sqrt{|\mathcal{S}|} \right)^2}{2 \sigma^2} \right)
= \exp\left( -\frac{(\varepsilon - L \sigma \sqrt{|\mathcal{S}|})^2}{2 L^2 \sigma^2} \right),
\]
as desired.
\end{proof}

\bibliographystyle{ecta}
\bibliography{citations.bib}
\end{document}